\documentclass[11pt]{article}

\usepackage{fullpage}
\usepackage{amssymb}
\usepackage{amsfonts}
\usepackage{amsmath}
\usepackage{amsthm}
\usepackage{latexsym}
\usepackage{epsfig}
\usepackage{indentfirst}
\usepackage{makeidx}
\usepackage{hyperref}
\usepackage{pdfpages}
\usepackage{graphicx}
\usepackage{mathtools}

%\usepackage{multicol}

%\definecolor{corlinks}{RGB}{64,128,128}
%\definecolor{cormenu}{RGB}{0,37,94}
%\definecolor{corurl}{RGB}{0,46,91}

\usepackage[numbers]{natbib} \bibliographystyle{plainnat}

\theoremstyle{plain} %% This is the default
\newtheorem{thm}{Theorem}[section]

% to use independent counters for the following, delete [thm]
\newtheorem{cor}[thm]{Corollary}
\newtheorem{lem}[thm]{Lemma}
\newtheorem{prop}[thm]{Proposition}

\theoremstyle{definition}
\newtheorem{defn}[thm]{Definition}

\newtheorem{example}[thm]{Example}

\newtheorem{remark}[thm]{Remark}

\theoremstyle{remark}

% number systems

\newcommand{\nn}{\mathbb{N}}

\newcommand{\ip}[1]{\:\quotient{#1}{\cong}}

%\newcommand{\coloneqq}{:=}

% --------------------------------------------------------------------
%\newcommand{\quotient}[2]{#1/\!#2}
%\newcommand{\quotient}[2]{#1/#2}
% I don't understand the following, but the following seems to give
% right spacing. Look here for details
% http://tex.stackexchange.com/questions/7075/typesetting-quotients-and-double-quotients
\makeatletter
\newcommand*{\@old@slash}{}\let\@old@slash\slash
\def\slash{\relax\ifmmode\delimiter"502F30E\mathopen{}\else\@old@slash\fi}
\makeatother
\def\backslash{\delimiter"526E30F\mathopen{}}
\newcommand{\quotient}[2]{#1\slash#2}
% --------------------------------------------------------------------
%% cardinality
% \newcommand{\card}[1]{\ensuremath{\left\|#1\right\|}}
\newcommand{\card}[1]{\ensuremath{\left|#1\right|}}
% --------------------------------------------------------------------

% \mathcal

\newcommand{\mcc}{\mathcal{C}}

\newcommand{\mcf}{\mathcal{F}}
\newcommand{\mcg}{\mathcal{G}}

% --------------------------------------------------------------------
% margin and in-text comments ----------------------------------------
% format for placing queries is \query{small comment}{longer comment} -
% the 'small comment' will appear in the margin, and the 'larger
% comment' will appear in the text.

% place \removequeries after begin{document} to suppress all comments

% --------------------------------------------------------------------

\newcommand{\eqdef}{\stackrel{\rm def}{=}}

\begin{document}

% \removequeries

\title{An algebraic formulation of the graph reconstruction conjecture\\~\\}

\author{ Igor C. Oliveira\footnote{Supported in part by NSF grants CCF-0915929 and CCF-1115703.}\\
  \small{Columbia University}\\
  \small{\texttt{oliveira@cs.columbia.edu}} \and Bhalchandra
  D. Thatte\footnote{Supported by CNPq grant 151782/2010-5 and by
    MaCLinC Project at Universidade de S\~{a}o Paulo.}\\
  \small{Universidade de S\~{a}o Paulo}\\
  \texttt{\small{thatte@ufmg.br}}\\~\\
}

%\date{}

\maketitle

\begin{abstract}

  The graph reconstruction conjecture asserts that every finite simple
  graph on at least three vertices can be
  reconstructed up to isomorphism from its deck - the collection of its
  vertex-deleted subgraphs. Kocay's Lemma is an important tool in graph
  reconstruction.  Roughly speaking, given the deck of a graph $G$ and
  any finite sequence of graphs, it gives a linear constraint that every
  reconstruction of $G$ must satisfy.

  Let $\psi(n)$ be the number of distinct (mutually non-isomorphic)
  graphs on $n$ vertices, and let $d(n)$ be the number of distinct decks
  that can be constructed from these graphs. Then the difference
  $\psi(n) - d(n)$ measures how many graphs cannot be
  reconstructed from their decks. In particular, the graph
  reconstruction conjecture is true for $n$-vertex graphs if and only if
  $\psi(n) = d(n)$. 
  
  We give a framework based on Kocay's lemma to study
  this discrepancy. We prove that if $M$ is a matrix of covering numbers
  of graphs by sequences of graphs, then $d(n) \geq
  \mathsf{rank}_\mathbb{R}(M)$. In particular, all $n$-vertex graphs are
  reconstructible if one such matrix has rank $\psi(n)$. To complement
  this result, we prove that it is possible to choose a family of
  sequences of graphs such that the corresponding matrix $M$ of covering
  numbers satisfies $d(n) = \mathsf{rank}_\mathbb{R}(M)$.
\end{abstract}

\section{Introduction}

The graph reconstruction conjecture was proposed by \citet{ulam1960} and
\citet{kelly1942}. Informally, it states that if two finite, undirected,
simple graphs on at least three vertices have the same collection
(\emph{multi-set} or \emph{deck}) of \emph{unlabelled} vertex-deleted
subgraphs, then the graphs are isomorphic; in other words, any such
graph can be \emph{reconstructed} up to isomorphism from the collection
of its unlabelled vertex-deleted subgraphs.

The conjecture has been verified by \citet{mckay1997small} for all
undirected, finite, simple graphs on eleven or fewer vertices. In
addition, it has been proven for many particular classes of graphs, such
as regular graphs, disconnected graphs and trees
(\citet{kelly1957congruence}). In fact, \citet{bollob'as-1990} showed that for almost all graphs, just three (carefully chosen)
subgraphs in the deck are sufficient to reconstruct the graph. On the other hand, a similar
conjecture does not hold for directed graphs:
\citet{stockmeyer-1977,stockmeyer-1981} constructed a number of infinite
families of non-reconstructible directed graphs. For a more
comprehensive introduction to the problem, we refer to a survey by
\citet{bondy-1991}. For the standard graph theoretic terminology not
defined here, we refer to \citet{west2001}.

Kelly's Lemma \cite{kelly1957congruence} is one of the most useful
results in graph reconstruction. Let $s(F,G)$ denote the number of
subgraphs of $G$ isomorphic to $F$. Kelly's lemma states that for $v(F)
< v(G)$, the parameter $s(F,G)$ is reconstructible, in the sense that if
$G$ and $G^\prime$ have the same deck then $s(F,G') = s(F,G)$. Several
propositions in graph reconstruction rely on this useful lemma.

Kocay's Lemma \cite{kocay-1981b} allows us, to some extent, to overcome
the restriction $v(F) < v(G)$ in Kelly's lemma. It provides a linear
constraint on $s(\cdot,G)$ that must be satisfied by every
reconstruction of $G$. Informally, it says that, if $\mathcal{F} = (F_1,
\ldots, F_m)$ is a sequence of graphs, each of which has at most
$v(G)-1$ vertices, then there are constants $c(\mathcal{F},H)$ such that
the value of the sum $\sum_H c(\mathcal{F},H) \cdot s(H,G)$ is
reconstructible, where the sum is taken over all unlabelled $n$-vertex
graphs $H$.  Roughly speaking, the constant $c(\mathcal{F},H)$ counts
the number of ways to \emph{cover} the graph $H$ by graphs in the
sequence $\mathcal{F}$.

Kocay's Lemma has been used to show several interesting results in graph
reconstruction. For instance, by carefully selecting the sequence
$\mathcal{F}$, it is possible to give a simple proof that disconnected
graphs are reconstructible. In addition, it can be used to show that the
number of perfect matchings, the number of spanning trees, the
characteristic polynomial, the chromatic polynomial, and many other
parameters of interest are reconstructible; see \citet{bondy-1991}.

It is natural to wonder whether even more restrictions may be imposed on
the reconstructions of $G$ by applications of Kocay's Lemma. Recall that
it is possible to use different sequences of graphs in each invocation
of the lemma, and as explained before, for each sequence we get a linear
constraint that the reconstructions of $G$ must satisfy. By analysing
such equations one would expect to obtain a wealth of information about
the structure of any reconstruction of $G$ (perhaps enough equations may
even allow us to conclude that $G$ is reconstructible). In this paper we
investigate how much information one can obtain by setting up such
equations.

We prove that the equations obtained by applying Kocay's Lemma to the
deck of a graph $G$ using distinct sequences of graphs provide important
information not only about the reconstructions of $G$, but also on the
total number of non-reconstructible graphs on $n$ vertices. More
formally, let $d(n)$ be the number of distinct decks obtained from
$n$-vertex graphs. We show that if $M$ is the matrix of coefficients
corresponding to these equations, then $d(n) \geq
\mathsf{rank}_\mathbb{R}(M)$, i.e., the rank of this matrix provides a
lower bound on the number of distinct decks. In particular, the
existence of a full-rank matrix of coefficients would imply that all
graphs on $n$ vertices are reconstructible. In addition, we give a proof
that there exist $d(n)$ sequences of graphs $\mcf_1, \mcf_2,\ldots,
\mcf_{d(n)}$, with corresponding matrix $M$ of covering numbers, such
that $\mathsf{rank}_\mathbb{R}(M) = d(n)$. In other words, if the graph
reconstruction conjecture holds for graphs with $n$ vertices, then there
is a corresponding full-rank matrix certifying this statement.

We state our results in more generality for graphs, hypergraphs,
directed graphs, and also for classes of graphs for which similar
equations can be constructed; for example, analogous results hold for
planar graphs, disconnected graphs and trees.

Similar system of equations where considered by Kocay \citep{kocay1982},
where he restricted the total number of edges appearing in each sequence
of graphs on a given system of equations to be the same. Interestingly,
in this case it is not possible to show an equivalence to the graph
reconstruction conjecture. In particular, Kocay computed the ratio of the
  number of independent edge-identities and the number of mutually
  non-isomorphic graphs with $v$ vertices, $e$ edges, and no
  isolated vertices (for small parameters $v,e$), and
observed that these values can be strictly less than
one. Kocay asked if the reconstruction conjecture would
  fail to be true if the ratio became small enough.

Our contribution may be summarised as follows. We remove the restriction,
  as in Kocay's paper \citep[Theorem 5.2 in][]{kocay-1981b}, that the
  total number of edges be fixed among the sequences of graphs used to derive
  edge-identities. We show that the number of independent
  equations available at our disposal is precisely the number of
  distinct decks on a given number of vertices. Thus we give an
  algebraic characterisation, based on Kocay's lemma, for the
  discrepancy between the number of different decks and the number of
  distinct graphs - a measure of how badly Ulam's conjecture would fail
  to hold, if indeed it were to be false. In view of the result of Bollob{\'a}s
  mentioned earlier, the ratio of the number of independent equations
  and the number of distinct graphs cannot be small.

A different mathematical perspective on such equations is presented in
Mnukhin \citep{mnukhin1992}, where reconstruction problems are discussed
in the more general context of orbit algebras. Mnukhin's paper also
mentions a formulation of Ulam's conjecture in algebraic terms, based on
whether the graph algebra is generated by disconnected graphs
only. While there may be a
 translation between the two formulations, this is not immediately
obvious to the authors. We refer the reader to Mnukhin's paper for further details,
and to the original reference \citep{mnukhin1982reconstruction} (in
Russian) discussed in \citep{mnukhin1992}. Our
  results are simple to prove, can be specialised to several classes of
  graphs, as well as generalised to digraphs and hypergraphs, and
  provide an exact characterisation of the maximum number of independent
  equations.

Finally, our results may also be viewed as a limitation of the
lemmas of Kelly and Kocay (which is proved using Kelly's lemma), and in
this regard we share the pessimism expressed by Tutte (see Chapter 9, page 113, \citep{tutte1998graph}). The fact that the number of independent equations
is equal to the number of decks suggests that the
difficulties with Ulam's conjecture lie somewhere else. In particular, it seems
unlikely that applications of Kelly's lemma and Kocay's lemma
will shed light on these difficulties.

\section{Preliminaries}

In this paper, we consider general finite graphs - undirected graphs,
directed graphs, hypergraphs, graphs with or without multiple edges, and
with or without loops. We take the vertex set of a graph to be a finite
subset of $\nn$. We write $V^{(k)}$ for the family of $k$-element
subsets of a set $V$. Further, we use the notation $v(G) \eqdef |V(G)|$
    and $e(G) \eqdef |E(G)|$.

\begin{defn}[Graphs]
  A \emph{hypergraph} $G$ is a triple $(V,E,\phi)$, where $V$ is its
  \emph{vertex set} (also called \emph{ground set}, and written as
  $V(G)$) and $E$ is its set of \emph{hyperedges} (written as $E(G)$),
  and a map $\phi:E \rightarrow 2^V\backslash
  \emptyset$. An \emph{undirected graph} $G$ is a
  hypergraph with the restriction that $\phi:E \rightarrow V^{(1)}\cup
  V^{(2)}$; in this case we call a hyperedge $e$ an \emph{edge} (if
  $\card{\phi(e)} = 2$) or a \emph{loop} (if $\card{\phi(e)} = 1$). An
  undirected graph is \emph{simple} if it contains no loop. A
  \emph{directed graph} $G$ is a triple $(V,E,\psi)$, where $V$ is its
  vertex set and $E$ is the set of its \emph{arcs}, and a map $\psi:E
  \rightarrow V\times V$. The first element of $\psi(e)$ is called the
  \emph{tail} of the arc $e$, and the second element of $\psi(e)$ is
  called the \emph{head} of $e$. We denote the set of all finite graphs
  (including hypergraphs, undirected graphs and directed graphs) by
  $\mcg^*$.\footnote{Observe that we are defining graphs using triples
    because multiple edges are allowed.}
\end{defn}

\begin{remark}
  Although our results and proofs are stated in full generality, it may
  be helpful in a first reading to consider only finite, simple,
  undirected graphs.
\end{remark}

\begin{defn}[Graph isomorphism]
  Let $G$ and $H$ be two graphs. We say that $G$ and $H$ are
  \emph{isomorphic} (written as $G\cong H$) if there are one-one maps
  $f:V(G) \rightarrow V(H)$ and $g:E(G) \rightarrow E(H)$ such that an
  edge $e$ and a vertex $v$ are incident in $G$ if and only the edge
  $g(e)$ and the vertex $f(v)$ are incident in $H$. Additionally, in the
  case of directed graphs, a vertex $v$ is the head (or the tail) of an
  arc $e$ if and only if $f(v)$ is the head (or, respectively, the tail)
  of $g(e)$. The isomorphism class of a graph $G$, denoted by $\ip{G}$,
  is the set of graphs isomorphic to $G$.
\end{defn}

\begin{defn}
  A \emph{class of graphs} is a set of graphs that is closed under
  isomorphism. A class of graphs is said to be \emph{finite} if contains
  finitely many isomorphism classes.
\end{defn}

\begin{defn}[Reconstruction]
  Let $G$ be graph and let $v$ be a vertex of $G$. The induced subgraph
  of $G$ obtained by deleting $v$ and all edges incident with $v$ is
  called a \emph{vertex-deleted subgraph} of $G$, and is written as
  $G-v$. We say that $H$ is a \emph{reconstruction} of $G$ (written as
  $H \sim G$) if there is a one-one map $f:V(G) \rightarrow V(H)$ such
  that for all $v \in V(G)$, the graphs $G-v$ and $H-f(v)$ are
  isomorphic. The relation $\sim$ is an equivalence relation.
  %% DO NOT DELETE - decks may be useful somewhere else.
  % The \emph{deck} of a graph $G \in \overline{\mcg}$ (written as
  % $d_G$) is a map $d_G:\overline{\mcg} \rightarrow \nn$ such that
  % $d_G(F)$ is the number of vertex-deleted subgraphs of $G$ that are
  % isomorphic to $F$. Given graphs $G$ and $H$, we have $d_G = d_H$ if
  % and only if $G \sim H$.
  % We denote the set of reconstructions of $G$ by $[G]$.
  We say that a graph $G$ is \emph{reconstructible} if every
  reconstruction of $G$ is isomorphic to $G$ (i.e., if $H\sim G$ implies
  $H\cong G$). A parameter $t(G)$ is said to be \emph{reconstructible}
  if $t(H) = t(G)$ for all reconstructions $H$ of $G$. Let $\mcc$ be a
  class of graphs. We say that $\mcc$ is \emph{recognisable} if, for any
  $G \in \mcc$, every reconstruction of $G$ is in $\mcc$. Furthermore,
  we say that $\mcc$ is \emph{reconstructible} if every graph $G \in
  \mcc$ is reconstructible.
\end{defn}

%% DO NOT DELETE - decks may be useful later
% Informally speaking, a deck of a graph may be interpreted as the
% collection (multi-set) of its unlabelled vertex-deleted subgraphs by
% restricting the map $d_G$ to those $F$ in $\overline{\mcg}$ for which
% $d_G(F)$ is non-zero and observing that the map $d_G$ is constant over
% each isomorphism class of $\overline{\mcg}$; so $d_G$ may be thought
% of as a map from the set of isomorphism classes of $\overline{\mcg}$
% to $\nn$.

\begin{example}
  \label{ex:pathological}
  Let $G(V,E,\phi)$ be a hypergraph. The number of edges incident with
  all vertices (i.e., edges $e\in E$ such that $\phi(e) = V$, which we
  call \emph{big edges}), is not a reconstructible parameter. For
  example, if $G^k$ is a graph obtained from $G$ by adding $k$ new edges
  $e_1,e_2,\dots,e_k$ and making them incident with all vertices in $V$,
  then $G^k$ is a reconstruction of $G$. In this sense, no hypergraphs
  are reconstructible, and each hypergraph has infinitely many mutually
  non-isomorphic reconstructions. If $G$ is a graph in class $\mcc$,
  then $\mcc$ is not recognisable if for some $k$, the graph $G^k$ is
  not in $\mcc$; and $\mcc$ is not finite if graphs $G^k$ are all in
  $\mcc$. On the other hand, the number of \emph{small edges}, i.e.,
  edges $e\in E$ such that $\phi(e) \neq V$, is a reconstructible
  parameter.
\end{example}

In view of the above example, we will always use $\mcg^*$ for the set of
all graphs, $\mcg$ for the set of all graphs without big edges, and
$\mcg_n$ for the set of $n$-vertex graphs without big edges.  A class
$\mcc_n$ will always be a subset of $\mcg_n$. We will use the following
slightly restrictive definitions for some other reconstruction terms.
% The deck of a graph $G$ will still be a map
% $d_G:\overline{\mcg}\rightarrow \nn$,

\begin{defn}
  A graph $G$ in $\mcg$ is reconstructible if it is
  \emph{reconstructible modulo big edges}, i.e., if $G^\prime$ is a
  reconstruction of $G$ and $G^\prime \in \mcg$, then $G^\prime$ is
  isomorphic to $G$. A subclass $\mcc$ of $ \mcg$ is recognisable if for
  each graph $G$ in $\mcc$, each reconstruction of $G$ in $\mcg$ is also
  in $\mcc$. A subclass $\mcc$ of $\mcg$ is reconstructible if each
  graph in $\mcc$ is reconstructible (modulo big edges).
\end{defn}

\begin{example}
  Disconnected undirected graphs on 3 or more vertices are recognisable
  and reconstructible. However, there are classes of graphs that are
  recognisable, but not known to be reconstructible. An important
  example is the class of planar graphs
  (\citet{bilinski2007reconstruction}).
\end{example}

Since $\cong$ and $\sim$ are equivalence relations, the \emph{quotient}
notation may be conveniently used to define various equivalence classes
of graphs. We write the set of all isomorphism classes of graphs as
$\quotient{\mcg^*}{\cong}$; analogously we use
$\quotient{\mcg_n}{\cong}$, $\quotient{\mcc}{\cong}$,
$\quotient{\mcc_n}{\cong}$, and so on. We define an \emph{unlabelled
  graph} to be an isomorphism class of graphs. But sometimes we abuse
the notation slightly, e.g., if a quantity is invariant over an
isomorphism class $H$, then in the same context we may also use $H$ to
mean a representative graph in the class. Similarly, we denote various
reconstruction classes by $\quotient{\mcg}{\sim}$,
$\quotient{\mcg_n}{\sim}$, $\quotient{\mcc}{\sim}$,
$\quotient{\mcc_n}{\sim}$, and so on. Note that equivalence classes of
any class of graphs under $\sim$ are refined by $\cong$; in particular,
$\card{\quotient{\mcc_n}{\sim}} \leq \card{\quotient{\mcc_n}{\cong}}$,
and equality holds if and only if the class $\mcc_n$ is
reconstructible. We will refer to reconstruction classes of $\mcc_n$
(i.e., members of $\quotient{\mcc_n}{\sim}$) by $R_1,R_2, \dots$, and
isomorphism classes of $R_i$ (i.e., members of $\quotient{R_i}{\cong}$)
by $R_{i,1}, R_{i,2}, \dots$.

Given graphs $G$ and $H$, the number of subgraphs of $G$ isomorphic to
$H$ is denoted by $s(H,G)$. The following two subgraph counting lemmas
are important results about the reconstructibility of the parameter
$s(H,G)$.

\begin{lem}[Kelly's Lemma, \cite{kelly1957congruence}]
  \label{l:kelly}
  Let $H$ be a reconstruction of $G$. If $F$ is any graph such that
  $v(F) < v(G)$, then $s(F,G) = s(F,H)$.
\end{lem}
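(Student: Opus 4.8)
The plan is to establish the classical counting identity that writes $s(F,G)$ entirely in terms of the numbers $s(F, G-v)$, $v \in V(G)$, and then to observe that the right-hand side of this identity depends only on the deck of $G$.

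First I would double count the set $P$ of pairs $(F', v)$ in which $F'$ is a subgraph of $G$ with $F' \cong F$ and $v \in V(G) \setminus V(F')$. Counting by the second coordinate: for a fixed $v \in V(G)$, a subgraph $F' \cong F$ of $G$ with $v \notin V(F')$ is precisely a subgraph of $G - v$ isomorphic to $F$. Here one checks, by unwinding the definition of the vertex-deleted subgraph, that deleting $v$ together with all edges incident with $v$ destroys no edge of $F'$ — every edge of $F'$ has all of its endpoints (for a directed graph, its head and tail; for a hyperedge $e$, its whole image $\phi(e)$) contained in $V(F')$, which does not contain $v$ — and introduces nothing new; this is the only step in which the particular graph model drawn from $\mcg^*$ plays a role, and it is routine. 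Hence $\card{P} = \sum_{v \in V(G)} s(F, G-v)$. Counting by the first coordinate: each copy $F'$ of $F$ in $G$ occupies exactly $v(F)$ vertices of $G$, leaving exactly $v(G) - v(F)$ valid choices of $v$, so $\card{P} = (v(G) - v(F)) \cdot s(F,G)$.

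Equating the two expressions and using the hypothesis $v(F) < v(G)$ — so that $v(G) - v(F) \geq 1$ and the division below is legitimate — yields Kelly's identity
\[
  s(F,G) \;=\; \frac{1}{v(G) - v(F)} \sum_{v \in V(G)} s(F, G-v) .
\]
Now let $f : V(G) \to V(H)$ be the injection witnessing $H \sim G$, so that $G - v \cong H - f(v)$ for all $v \in V(G)$. Comparing orders, $G-v$ has $v(G)-1$ vertices while $H-f(v)$ has $v(H)-1$; since the two are isomorphic, $v(G) = v(H)$, and therefore the injection $f$ between finite sets of equal size is in fact a bijection. Because $s(F,\cdot)$ is an isomorphism invariant, $s(F, G-v) = s(F, H - f(v))$ for every $v$; summing over $v$ and substituting $w = f(v)$ (valid since $f$ is a bijection) gives $\sum_{v \in V(G)} s(F, G-v) = \sum_{w \in V(H)} s(F, H - w)$. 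Applying the displayed identity to $H$ — legitimate because $v(F) < v(G) = v(H)$ — we conclude $s(F,H) = s(F,G)$.

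I expect no genuine obstacle: the argument is elementary double counting. The single point meriting care, since the paper treats the general class $\mcg^*$ of hypergraphs, undirected graphs and directed graphs at once, is the identification of copies of $F$ in $G$ avoiding $v$ with copies of $F$ in $G - v$; once the vertex-deletion operation is spelled out in each of the three models this identification is immediate, and the remainder of the proof is model independent.
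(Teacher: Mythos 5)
Your proof is correct: the double count of pairs $(F',v)$ with $F'\cong F$ a subgraph of $G$ and $v\in V(G)\setminus V(F')$ gives the classical identity $s(F,G)=\frac{1}{v(G)-v(F)}\sum_{v\in V(G)}s(F,G-v)$, and your care about edges incident with $v$ (including hyperedges and arcs) and about $v(G)=v(H)$ is exactly what is needed in the paper's general setting. The paper does not prove this lemma itself but cites Kelly; your argument is essentially that standard cited proof, so nothing further is required.
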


% \begin{proof}
%   Note that each subgraph of $G$ isomorphic to $F$ occurs in exactly
%   $v(G) - v(F)$ of the vertex-deleted subgraphs $G - v$. It follows that
%   \begin{equation}\nonumber
%     s(F,G) = \frac{1}{v(G) - v(F)} \sum_{v \in V(G)} s(F,G-v).
%   \end{equation}
%   Since the right-hand side of this formula depends only on $d_G$, the
%   parameter $s(F,G)$ is reconstructible.
% \end{proof}

\begin{defn}
  Let $G$ be a graph and let $\mcf \coloneqq (F_1,F_2, \ldots , F_m)$ be
  a sequence of graphs. A \emph{cover} of $G$ by $\mcf$ is a sequence
  $(G_1,G_2, \ldots , G_m)$ of subgraphs of $G$ such that $G_i \cong
  F_i$, $1 \leq i \leq m$, and $\bigcup G_i = G$. The number of covers
  of $G$ by $\mcf$ is denoted by $c(\mcf,G)$.
\end{defn}

\begin{lem}[Kocay's Lemma, \cite{kocay-1981b}]
  \label{l:kocay}
  Let $G$ be a graph on $n$ vertices. For any sequence of graphs $\mcf
  \coloneqq (F_1,F_2, \ldots , F_m)$, where $v(F_i) < n$, $1 \leq i \leq
  m$, the parameter
  \begin{equation} 
    \nonumber \sum_{H} c(\mcf,H)s(H,G)\nonumber
  \end{equation}
  is reconstructible, where the sum is over all unlabelled $n$-vertex
  graphs $H$.
\end{lem}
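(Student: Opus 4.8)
The plan is to establish, for an arbitrary (labelled) graph $G$ on $n$ vertices and any sequence $\mcf = (F_1,\dots,F_m)$, the identity
\[
\prod_{i=1}^{m} s(F_i,G) \;=\; \sum_{H} c(\mcf,H)\, s(H,G),
\]
where the sum ranges over all unlabelled graphs $H$ with $v(H)\le n$ (the sum has finite support, since a nonzero term forces $H$ to embed in $G$), and then to extract the $n$-vertex part using Kelly's lemma. Note that this identity requires no hypothesis on the sizes of the $F_i$; the condition $v(F_i)<n$ enters only at the reconstructibility step.

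First I would prove the identity by double counting ordered tuples of subgraphs. The left-hand side is, by definition of $s(\cdot,\cdot)$, the number of tuples $(G_1,\dots,G_m)$ in which each $G_i$ is a subgraph of $G$ with $G_i \cong F_i$. Every such tuple determines a subgraph $J \coloneqq \bigcup_{i=1}^{m} G_i$ of $G$, the union being formed inside $G$ (union of vertex sets, union of edge sets, incidence inherited from $G$); this is well defined for hypergraphs, undirected graphs, and directed graphs alike, precisely because the $G_i$ are sub-objects of the one common graph $G$. Conversely, for a fixed subgraph $J\subseteq G$, a tuple $(G_1,\dots,G_m)$ of subgraphs of $G$ with $\bigcup_i G_i = J$ consists of subgraphs of $J$, and such tuples are by definition exactly the covers of $J$ by $\mcf$; there are $c(\mcf,J)$ of them. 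Hence $\prod_i s(F_i,G) = \sum_{J\subseteq G} c(\mcf,J)$. Grouping the subgraphs $J$ of $G$ by isomorphism type and using that $c(\mcf,\cdot)$ is an isomorphism invariant (an isomorphism $J\to J'$ carries covers of $J$ bijectively to covers of $J'$) while the number of subgraphs of $G$ isomorphic to a fixed $H$ is $s(H,G)$, we obtain the displayed identity.

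Next I would split the sum according to whether $v(H)<n$ or $v(H)=n$ (no subgraph of $G$ has more than $n$ vertices). For each fixed $H$ with $v(H)<n$, Kelly's lemma (Lemma~\ref{l:kelly}) says $s(H,G)$ is reconstructible, and $c(\mcf,H)$ is a constant, so $\sum_{v(H)<n} c(\mcf,H)\,s(H,G)$ is reconstructible; likewise each factor $s(F_i,G)$ is reconstructible because $v(F_i)<n$, so $\prod_i s(F_i,G)$ is reconstructible. Therefore
\[
\sum_{v(H)=n} c(\mcf,H)\, s(H,G) \;=\; \prod_{i=1}^{m} s(F_i,G) \;-\; \sum_{v(H)<n} c(\mcf,H)\, s(H,G)
\]
is reconstructible, which is the claim. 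The only delicate point — and the step I would write out with care — is the combinatorial identity itself: one must check that the union of subgraphs is the right notion in the general (multi-edge, directed) setting, that the passage from tuples to covers is an exact bijection rather than a correspondence up to isomorphism, and that reindexing by isomorphism type contributes precisely the factor $s(H,G)$. Granting the identity, the reconstructibility assertion is immediate from Kelly's lemma.
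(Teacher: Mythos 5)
Your proposal is correct and follows essentially the same route as the paper: the double-counting identity $\prod_i s(F_i,G) = \sum_H c(\mcf,H)s(H,G)$ over unlabelled graphs on at most $n$ vertices, followed by Kelly's lemma applied to the left-hand side and to the terms with $v(H)<n$, and a rearrangement. You simply spell out the bijection between tuples and covers (grouping by the union subgraph $J$) in more detail than the paper does.
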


\begin{proof}
  We count in two ways the number of sequences $(G_1, \ldots, G_m)$ of
  subgraphs of $G$ such that $G_i \cong F_i$, $1 \leq i \leq m$. We have
  \begin{equation} \label{eq1} \prod_{i=1}^m s(F_i,G) = \sum_X c(\mcf,X)
    s(X,G),
  \end{equation}
  where the sum extends over all unlabelled graphs $X$ on at most $n$
  vertices. Since $v(F_i) < n$, it follows by Kelly's Lemma that the
  left-hand side of this equation is reconstructible. On the other hand,
  the terms $c(\mcf,X) s(X,G)$ are also reconstructible whenever $v(X) <
  n$. The result follows after rearranging Equation \ref{eq1}.
\end{proof}

To state our results in full generality, we make the following
definition.

\begin{defn}
  Let $\mcc_n$ be a class of graphs on $n$ vertices. We say that
  $\mcc_n$ \emph{satisfies Kocay's lemma} if, for every graph $G \in
  \mcc_n$ and every sequence of graphs $\mcf = (F_1, F_2, \ldots, F_m)$,
  where $v(F_i) < n$, $1 \leq i \leq m$, the sum
  \begin{equation} \nonumber \sum_{H \in \ip{\mcc_n}}
    c(\mcf,H)s(H,G)\nonumber
  \end{equation}
  is reconstructible.
\end{defn}

The following proposition gives a simple condition that is sufficient
for a class of graphs $\mcc_n$ to satisfy Kocay's lemma.

\begin{prop}
  Let $\mcc_n$ be a class of graphs on $n$ vertices. Suppose that
  $s(H,G)$ is reconstructible for every $G \in \mcc_n$ and for every
  $n$-vertex graph $H \notin \mcc_n$. Then the class $\mcc_n$ satisfies
  Kocay's lemma.
\end{prop}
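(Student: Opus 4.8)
The plan is to mimic the proof of Kocay's lemma (Lemma~\ref{l:kocay}) but to be careful about which graphs $X$ appear in the expansion, keeping track of whether they lie in $\mcc_n$ or not. Fix $G \in \mcc_n$ and a sequence $\mcf = (F_1, \ldots, F_m)$ with $v(F_i) < n$ for each $i$. As in Equation~\ref{eq1}, counting in two ways the number of sequences $(G_1,\ldots,G_m)$ of subgraphs of $G$ with $G_i \cong F_i$ gives
\begin{equation*}
  \prod_{i=1}^m s(F_i, G) = \sum_X c(\mcf, X)\, s(X,G),
\end{equation*}
where the sum is over all unlabelled graphs $X$ on at most $n$ vertices. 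The left-hand side is reconstructible by Kelly's lemma, since every $F_i$ has fewer than $n$ vertices. Now split the right-hand side into three groups: (i) graphs $X$ with $v(X) < n$; (ii) $n$-vertex graphs $X \in \ip{\mcc_n}$; and (iii) $n$-vertex graphs $X \notin \mcc_n$.

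For group (i), each term $c(\mcf,X) s(X,G)$ is reconstructible because $s(X,G)$ is reconstructible by Kelly's lemma and $c(\mcf,X)$ is a fixed constant depending only on $\mcf$ and $X$. For group (iii), the hypothesis of the proposition says precisely that $s(X,G)$ is reconstructible for every $n$-vertex $X \notin \mcc_n$, so again each such term is reconstructible. Therefore the sum over groups (i) and (iii) is a reconstructible quantity, and subtracting it (together with the reconstructible left-hand side) leaves
\begin{equation*}
  \sum_{H \in \ip{\mcc_n}} c(\mcf,H)\, s(H,G)
\end{equation*}
equal to a difference of reconstructible parameters, hence reconstructible. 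This is exactly the statement that $\mcc_n$ satisfies Kocay's lemma.

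There is essentially no hard step here; the only point requiring mild care is the bookkeeping — making sure the covering coefficients $c(\mcf,X)$ are genuine constants (independent of $G$, depending only on the isomorphism types involved) so that reconstructibility of $s(X,G)$ transfers to reconstructibility of $c(\mcf,X)s(X,G)$, and making sure the partition of the index set $\{X : v(X) \le n\}$ into the three groups is exhaustive and that the only potentially non-reconstructible terms are those in group (ii). One should also note that graphs $X$ appearing with nonzero $c(\mcf, X)$ automatically satisfy $v(X) \le n$ and, since $\bigcup G_i = X$ forces $X$ to be a subgraph of some $n$-vertex graph built from the $F_i$'s, this causes no issue with the ambient class. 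Once this is in place, the rearrangement of the identity is purely formal.
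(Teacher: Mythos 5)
Your proof is correct and follows essentially the same route as the paper: both expand $\prod_i s(F_i,G)$ as in Equation~\ref{eq1}, observe that all terms other than those indexed by $n$-vertex graphs in $\ip{\mcc_n}$ are reconstructible (by Kelly's lemma for smaller graphs and by the hypothesis for $n$-vertex graphs outside $\mcc_n$), and rearrange. Your three-way split is just a slightly more explicit bookkeeping of the paper's two-way split.
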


\begin{proof}
  Let $G \in \mcc_n$. Let $\mcf \coloneqq (F_1, F_2, \ldots, F_m)$ be
  any sequence of graphs such that $v(F_i) < n$, $1 \leq i \leq m$. We
  write the R.H.S. of Equation \ref{eq1} as
  \[
  \sum_{H \in \ip{\mcc_n}} c(\mcf,H)s(H,G) + \sum_{H \notin \ip{\mcc_n}}
  c(\mcf,H)s(H,G),
  \]
  where the second summation is reconstructible. Now we rearrange the
  terms in Equation \ref{eq1} to obtain $\sum_{H \in \ip{\mcc_n}}
  c(\mcf,H)s(H,G)$.
\end{proof}

The class of connected simple graphs satisfies Kocay's lemma, since if
$G$ is any connected graph and $H$ is any disconnected graph, then
$s(H,G)$ is reconstructible (see \citet{bondy-1991}). Other classes of
graphs that satisfy Kocay's lemma include planar graphs, trees and of
course the class of all graphs. Our theorems apply to finite and
recognisable classes of graphs satisfying Kocay's Lemma. All the above
classes of graphs are recognisable as well.

Let $\mcc_n \subseteq \mcg_n$ be a finite, recognisable class of
$n$-vertex graphs satisfying Kocay's Lemma. In the rest of this paper,
we study equations obtained by applying Kocay's Lemma to $\mcc_n$. It is
useful to view this lemma as follows. Let $\mcf \coloneqq (F_1, \ldots,
F_m)$, be a sequence of graphs where $v(F_i) < n$ for each $1 \leq i
\leq m$. Let $G, G^\prime \in R \in \quotient{\mcc_n}{\sim}$, i.e.,
$G^\prime$ is a reconstruction of $G$, and since $\mcc_n$ is
recognisable, $G^\prime$ is in $\mcc_n$. Then we have
\begin{equation}\nonumber
  \sum_{H \in \ip{\mcc_n}} c(\mcf,H)s(H,G') 
  = k_{\mcf,R}, 
\end{equation}
where $k_{\mcf,R}$ is a constant that depends only on the sequence
$\mcf$ and the reconstruction class $R$, i.e., it is a reconstructible
parameter. In this expression, $c(\mcf,H)$ is constant (i.e., it is
independent of the reconstruction class) and $s(H,G')$ depends on the
isomorphism class of a particular reconstruction $G^\prime$ of $G$ under
consideration. Therefore, each application of Kocay's Lemma provides a
linear constraint on $s(H,G^\prime)$ that all reconstructions $G^\prime$
of $G$ must satisfy.

This paper is devoted to a study of systems of such linear constraints
obtained by applications of Kocay's lemma. In particular, we study the
rank of a matrix of covering numbers that we define next.

\begin{defn}\label{d:matrix}
  Let $\mcc_n$ be a finite class of graphs on $n$ vertices. Let
  $\mathfrak{F} = (\mcf_1, \mcf_2, \ldots, \mcf_l)$ be a family of
  sequences of graphs on at most $n-1$ vertices. We let
  $M_{\mathfrak{F},\ip{\mcc_n}} \in \mathbb{R}^{\card{\mathfrak{F}}
    \times \card{\ip{\mcc_n}}}$ to be a matrix whose rows are indexed by
  the sequences $\mcf_i, i=1,2,\ldots, l$ and whose columns indexed by
  the distinct isomorphism classes of graphs in $\mcc_n$. The entries of
  $M_{\mathfrak{F},\ip{\mcc_n}}$ are the covering numbers defined by
  $c(\mcf,H)$, where $\mcf \in \mathfrak{F}$ and $H \in
  \ip{\mcc_n}$.
\end{defn}

\section{On the rank of a matrix obtained from Kocay's Lemma}

\subsection{Large rank implies few non-reconstructible graphs} 
As observed earlier, for any finite class $\mcc_n$ of graphs,
$\card{\quotient{\mcc_n}{\sim}} \leq \card{\quotient{\mcc_n}{\cong}}$,
and the bigger the number of distinct reconstruction classes, the
smaller is the number of non-reconstructible graphs. The main result
of this section, Theorem \ref{t:main}, states that for any finite,
recognisable class of graphs satisfying Kocay's lemma, the number of
distinct reconstruction classes is bounded from below by the rank of the
matrix of covering numbers, for any family of sequences of graphs.

Let $\mcc_n$ be a finite, recognisable class of $n$-vertex graphs
satisfying Kocay's Lemma. Let $\mathfrak{F}$ be a finite family of
sequences of graphs on at most $n-1$ vertices. Let
$M_{\mathfrak{F},\ip{\mcc_n}}$ be the corresponding matrix of covering
numbers $c(\mcf,H)$, where $\mcf \in \mathfrak{F}$ and $H \in
\ip{\mcc_n}$ (see Definition~\ref{d:matrix}). Let $W = \{ x \in
\mathbb{R}^{\card{\quotient{\mcc_n}{\cong}}} \mid
M_{\mathfrak{F},\ip{\mcc_n}} \cdot x \equiv 0 \}$ be a subspace of the
vector space $\mathbb{R}^{\card{\quotient{\mcc_n}{\cong}}}$ over
$\mathbb{R}$. We associate with $\mcc_n$ the constant $\alpha(\mcc_n)
\coloneqq \card{\quotient{\mcc_n}{\cong}} -
\card{\quotient{\mcc_n}{\sim}}$.

\begin{lem}\label{l:dimw}
  $\mathsf{dim}(W) \geq \alpha(\mcc_n)$.
\end{lem}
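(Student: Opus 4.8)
The plan is to exhibit $\alpha(\mcc_n)$ linearly independent vectors in $W$, one associated with each reconstruction class that contains more than one isomorphism class. Recall that $\card{\quotient{\mcc_n}{\sim}} $ counts reconstruction classes $R_1, R_2, \ldots$, and each $R_i$ splits into isomorphism classes $R_{i,1}, R_{i,2}, \ldots, R_{i, t_i}$; the quantity $\alpha(\mcc_n) = \sum_i (t_i - 1)$ measures the total ``excess.'' So it suffices to produce, inside each $R_i$ with $t_i \geq 2$, exactly $t_i - 1$ linearly independent vectors in $W$, and then observe that vectors coming from different reconstruction classes have disjoint supports (since the coordinates of $\mathbb{R}^{\card{\quotient{\mcc_n}{\cong}}}$ are indexed by isomorphism classes, and the $R_{i,j}$ partition these), hence the whole collection of $\sum_i (t_i-1) = \alpha(\mcc_n)$ vectors is linearly independent.

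The key observation is the one already highlighted in the text just before the lemma: if $G, G' \in R_i$ (so $G' \sim G$), then for every sequence $\mcf$ in $\mathfrak{F}$ we have $\sum_{H \in \ip{\mcc_n}} c(\mcf, H)\, s(H, G') = k_{\mcf, R_i}$, a constant depending only on $\mcf$ and $R_i$, not on the particular representative. In matrix terms, if for each isomorphism class $K \in \quotient{\mcc_n}{\cong}$ we let $\mathbf{s}_K \in \mathbb{R}^{\card{\quotient{\mcc_n}{\cong}}}$ be the vector whose $H$-coordinate is $s(H, K)$ (for a representative of $K$), then $M_{\mathfrak{F}, \ip{\mcc_n}} \cdot \mathbf{s}_K$ depends only on the reconstruction class of $K$. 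Consequently, for any two isomorphism classes $R_{i,1}$ and $R_{i,j}$ lying in the same reconstruction class $R_i$, the difference $\mathbf{s}_{R_{i,j}} - \mathbf{s}_{R_{i,1}}$ lies in $W = \ker M_{\mathfrak{F}, \ip{\mcc_n}}$. For each $R_i$ with $t_i \geq 2$ I define the $t_i - 1$ vectors $w_{i,j} \coloneqq \mathbf{s}_{R_{i,j}} - \mathbf{s}_{R_{i,1}}$ for $j = 2, \ldots, t_i$; by the above, each $w_{i,j} \in W$.

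It remains to check linear independence of the full family $\{ w_{i,j} : t_i \geq 2,\ 2 \leq j \leq t_i \}$. The main point is that the vectors $\mathbf{s}_{R_{i,j}}$, as $R_{i,j}$ ranges over \emph{all} isomorphism classes in $\mcc_n$, are themselves linearly independent: this is because the matrix $\big( s(H, K) \big)_{H, K}$, with rows and columns indexed by $\quotient{\mcc_n}{\cong}$, is upper-triangular with nonzero diagonal entries when the classes are ordered by any linear extension of the subgraph partial order (one has $s(K,K) \geq 1$ and $s(H,K) = 0$ when $H$ has more edges than $K$, or more generally when $H$ is not a subgraph of $K$; a standard argument — this is exactly the linear-algebra core behind Kelly-type counting — makes this precise). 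Granting that, fix a linear combination $\sum_{i,j} \lambda_{i,j} w_{i,j} = 0$ and expand it in the basis $\{\mathbf{s}_K\}$: within each block $R_i$ the coefficient of $\mathbf{s}_{R_{i,j}}$ for $j \geq 2$ is $\lambda_{i,j}$ and the coefficient of $\mathbf{s}_{R_{i,1}}$ is $-\sum_{j\geq 2}\lambda_{i,j}$; since the $\mathbf{s}_K$ are linearly independent, every $\lambda_{i,j} = 0$. This gives $\dim(W) \geq \alpha(\mcc_n)$.

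I expect the main obstacle to be the triangularity claim for the matrix $\big(s(H,K)\big)$, i.e.\ justifying that $\{\mathbf{s}_K\}_{K \in \quotient{\mcc_n}{\cong}}$ is linearly independent; one must be careful that $\mcc_n$ may not be closed under taking subgraphs, so ``$H$ is a subgraph of $K$'' must be read with $H, K \in \ip{\mcc_n}$ and the partial order taken within $\mcc_n$ — but the diagonal term $s(K,K) \geq 1$ and the fact that a strictly ``larger'' class cannot be a subgraph of a ``smaller'' one still deliver triangularity with nonzero diagonal after a suitable ordering. If one prefers to avoid this, an alternative is to argue more directly: the map $K \mapsto \mathbf{s}_K$ is injective and its image vectors are affinely/linearly independent because $s(\cdot,\cdot)$ determines the graph, but the triangular-matrix route is the cleanest. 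Everything else is bookkeeping about the partition of $\quotient{\mcc_n}{\cong}$ into reconstruction classes.
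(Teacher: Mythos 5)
Your proposal is correct, and it uses the same vectors as the paper: for each non-reconstructible class $R_i$ you take the differences $s(\cdot,R_{i,j})-s(\cdot,R_{i,1})$, and you put them in $W$ by exactly the paper's argument (Kocay's lemma makes $\sum_H c(\mcf,H)s(H,\cdot)$ constant on a reconstruction class). Where you diverge is the linear-independence step. The paper orders the difference vectors by edge count and verifies a triangular pattern of evaluations directly, i.e.\ $w^{i,j}(G_{i,j})=1$ while earlier vectors vanish at that coordinate; this uses the reconstructibility of the number of (small) edges, $e(G_{i,j})=e(G_{i,1})$, twice. You instead prove the stronger statement that the whole family $\{\mathbf{s}_K\}_{K\in\ip{\mcc_n}}$ is linearly independent, via triangularity of the matrix $\bigl(s(H,K)\bigr)$ under a linear extension of the (spanning-)subgraph order on $n$-vertex classes --- which is legitimate even though $\mcc_n$ need not be subgraph-closed, since $s(H,K)=0$ whenever $e(H)>e(K)$ or $e(H)=e(K)$ with $H\ncong K$, and $s(K,K)\geq 1$ --- and then get independence of the differences by purely formal linear algebra. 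This variant is slightly cleaner (it never invokes edge-count reconstructibility) and proves a bit more, at the cost of the auxiliary triangularity claim; the paper's route needs only the evaluations it actually uses. One small inaccuracy: your remark that vectors from different reconstruction classes have ``disjoint supports'' is false if read coordinate-wise (the coordinate $H$ of $w_{i,j}$ can be nonzero for $H$ outside $R_i$, e.g.\ spanning subgraphs with fewer edges); it is only true for supports with respect to the expansion in the family $\{\mathbf{s}_K\}$. Since your actual independence argument is the expansion argument and never uses coordinate-wise disjointness, this does not affect correctness, but the sentence should be fixed or dropped.
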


\begin{proof}
  If $\alpha(\mcc_n)= 0$, the result is trivial. Otherwise, let $R_1,
  \ldots, R_s \in \quotient{\mcc_n}{\sim}$ be the non-reconstructible
  reconstruction classes in $\mcc_n$, i.e., $r_i \coloneqq
  \card{\quotient{R_i}{\cong}} > 1$ for all $i\in \{1,2,\ldots,
  s\}$. Let $R_{i,j}, j \in \{1,2,\ldots, r_i\}$ be the isomorphism
  classes in $R_i, i\in \{1,2,\ldots, s\}$. Let $G_{i,j}$ be
  representative graphs from $R_{i,j}$.

  For each $G_{i,j}$, we define a vector $w^{i,j} \in
  \mathbb{R}^{\card{\quotient{\mcc_n}{\cong}}}$, with its entries, which
  are indexed by unlabelled graphs $H \in \ip{\mcc_n}$, defined as
  follows:
  \[
  w^{i,j}(H) := s(H,G_{i,j}) - s(H,G_{i,1}), \,\text{where}\, H \in
  \ip{\mcc_n}.
  \]

  Observe that to prove the lemma it is enough to show that the vectors
  $w^{i,j}$ satisfy the following properties:
  \begin{enumerate}
  \item [(\emph{i})] for all $i \in \{1,2,\ldots, s\}$, for all $j \in
    \{1,2,\ldots, r_i\}$, $w^{i,j} \in W$; and
  \item[(\emph{ii})] the vectors in the set $U \coloneqq \{w^{i,j} \mid
    1 \leq i \leq s,\, 2 \leq j \leq r_i\}$ are non-zero and linearly
    independent, where $\card{U} = \alpha(\mcc_n)$.
  \end{enumerate}
  
  \noindent \emph{Proof of \emph{(}i\emph{)}}: Graphs $G_{i,j}$ and
  $G_{i,1}$ are reconstructions of each other, and $\mcc_n$ satisfies
  Kocay's Lemma.  Therefore, for every row $M_\mcf$ of
  $M_{\mathfrak{F},\ip{\mcc_n}}$, we have,
  \begin{eqnarray*}
    \sum_{H \in \ip{\mcc_n}} c(\mcf,H)s(H,G_{i,j}) & = & 
    \sum_{H \in \ip{\mcc_n}} c(\mcf,H)s(H,G_{i,1}) \\
    \therefore \quad M_{\mcf} \cdot w^{i,j} & = & 
    \sum_{H \in \ip{\mcc_n}} c(\mcf,H) (s(H,G_{i,j}) - s(H,G_{i,1})) = 0.
  \end{eqnarray*}
  Therefore, $M_{\mathfrak{F},\ip{\mcc_n}}\cdot w^{i,j} = 0$.

  \

  \noindent \emph{Proof of \emph{(}ii\emph{)}}: Let the vectors in $U$
  be ordered $u^1,u^2,\ldots, u^{\alpha(\mcc_n)}$ so that the
  corresponding graphs are ordered by non-decreasing numbers of small
  edges. We prove that $u^1$ is non-zero, and for each $k \in \{2,\dots,
  \alpha(\mcc_n)\}$, the vector $u^k$ is non-zero and is linearly
  independent of $u^1,u^2,\dots, u^{k-1}$, which would imply that the
  vectors in $U$ are linearly independent.

  Let $u^\ell = w^{i,j}$ for some $i \in \{1,2,\ldots, s\}$ and $j \in
  \{2,\ldots, r_i\}$. First recall that $\mcc_n$ is recognisable, $R_i
  \in \quotient{\mcc_n}{\sim}$, and $G_{i,j} \in \quotient{R_i}{\cong}$;
  therefore, $G_{i,j} \in \quotient{\mcc_n}{\cong}$. In addition,
  $G_{i,j} \ncong G_{i,1}$ since $j \geq 2$ and these two graphs belong
  to distinct isomorphism classes within the same reconstruction class
  $R_i$. Finally, the number of small edges is reconstructible, i.e.,
  $e(G_{i,j}) = e(G_{i,1})$. Therefore,
  \begin{equation}\nonumber
    u^\ell(G_{i,j}) = w^{i,j}(G_{i,j}) =
    s(G_{i,j},G_{i,j}) - s(G_{i,j},G_{i,1}) = 1 - 0 = 1.
  \end{equation}

  Now consider the vectors $u^k = w^{i^\prime,j^\prime}$ and $u^\ell =
  w^{i,j}$, where $1 \leq k < \ell$. We prove that $u^k(G_{i,j}) =
  0$. Since $k < \ell$, according to the ordering of $U$, we have
  $e(G_{i^\prime,j^\prime}) \leq e(G_{i,j})$. Since
  $G_{i^\prime,j^\prime}$ and $G_{i^\prime,1}$ are reconstructions of
  each other, we have $e(G_{i^\prime,j^\prime}) = e(G_{i^\prime,1})$.

  Now, if $e(G_{i^\prime,j^\prime}) < e(G_{i,j})$, then
  \begin{equation}\nonumber
    u^k(G_{i,j}) = w^{i^\prime,j^\prime}(G_{i,j})
    = s(G_{i,j}, G_{i^\prime,j^\prime}) - s(G_{i,j},G_{i^\prime,1}) = 0 - 0 = 0.
  \end{equation}
  On the other hand, if $e(G_{i^\prime,j^\prime}) = e(G_{i,j})$, then
  again $s(G_{i,j}, G_{i^\prime,j^\prime}) = 0$ (since $G_{i,j}$ and
  $G_{i^\prime,j^\prime}$ are non-isomorphic but have the same number of
  edges) and $s(G_{i,j},G_{i^\prime,1}) = 0$ (because $j > 1$, so
  $G_{i,j}$ and $G_{i^\prime,1}$ are non-isomorphic but have the same
  number of edges).

  Now the lemma follows from $\alpha(\mcc_n) \coloneqq
  \card{\quotient{\mcc_n}{\cong}} - \card{\quotient{\mcc_n}{\sim}} =
  \sum_{i=1}^s (r_i-1) = \card{U}$.
\end{proof}

\begin{thm} \label{t:main} Let $\mcc_n$ be a finite, recognisable class
  of $n$-vertex graphs satisfying Kocay's Lemma. Let $\mathfrak{F}$ be a
  family of sequences of graphs on at most $n-1$ vertices. If
  $M_{\mathfrak{F},\ip{\mcc_n}}$ is the corresponding matrix of covering
  numbers associated with $\mathfrak{F}$ and $\mcc_n$, then
  $\card{\quotient{\mcc_n}{\sim}} \geq
  \mathsf{rank}_\mathbb{R}(M_{\mathfrak{F},\ip{\mcc_n}})$.
\end{thm}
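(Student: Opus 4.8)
The statement is an immediate consequence of Lemma~\ref{l:dimw} together with the rank–nullity theorem, so the plan is short. View $M_{\mathfrak{F},\ip{\mcc_n}}$ as the matrix of a linear map $T \colon \mathbb{R}^{\card{\quotient{\mcc_n}{\cong}}} \to \mathbb{R}^{\card{\mathfrak{F}}}$ acting on column vectors $x$ indexed by the isomorphism classes of $\mcc_n$. By construction $W = \ker T$, and the source space has dimension exactly $\card{\quotient{\mcc_n}{\cong}}$, the number of columns of $M_{\mathfrak{F},\ip{\mcc_n}}$.

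First I would invoke rank–nullity to write
\begin{equation}\nonumber
\mathsf{rank}_\mathbb{R}(M_{\mathfrak{F},\ip{\mcc_n}}) = \card{\quotient{\mcc_n}{\cong}} - \mathsf{dim}(W).
\end{equation}
Then I would apply Lemma~\ref{l:dimw}, which gives $\mathsf{dim}(W) \geq \alpha(\mcc_n) = \card{\quotient{\mcc_n}{\cong}} - \card{\quotient{\mcc_n}{\sim}}$. Substituting, the $\card{\quotient{\mcc_n}{\cong}}$ terms cancel and one obtains $\mathsf{rank}_\mathbb{R}(M_{\mathfrak{F},\ip{\mcc_n}}) \leq \card{\quotient{\mcc_n}{\sim}}$, which is the claim.

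The only points that need a word of care are bookkeeping rather than substance: that $\mathsf{rank}_\mathbb{R}$ is unambiguous because row rank equals column rank over $\mathbb{R}$, so the rank of $M_{\mathfrak{F},\ip{\mcc_n}}$ is the rank of the column map $T$ whose kernel is $W$; and that the ambient dimension in the rank–nullity identity is the number of \emph{columns} of $M_{\mathfrak{F},\ip{\mcc_n}}$, namely $\card{\ip{\mcc_n}} = \card{\quotient{\mcc_n}{\cong}}$, matching the definition of $W$ in the setup preceding Lemma~\ref{l:dimw}. There is no genuine obstacle here: all the real work has already been done in establishing Lemma~\ref{l:dimw}, and in particular in the construction of the linearly independent family $U \subseteq W$ of size $\alpha(\mcc_n)$. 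I would also remark, as a sanity check, that equality $\card{\quotient{\mcc_n}{\sim}} = \mathsf{rank}_\mathbb{R}(M_{\mathfrak{F},\ip{\mcc_n}})$ can only hold when $\mathsf{dim}(W) = \alpha(\mcc_n)$ exactly, which foreshadows the converse direction pursued later in the paper.
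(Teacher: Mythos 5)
Your proposal is correct and follows essentially the same route as the paper: the paper's proof likewise combines the Rank--Nullity Theorem (with ambient dimension $\card{\quotient{\mcc_n}{\cong}}$, the number of columns) with Lemma~\ref{l:dimw} to get $\alpha(\mcc_n) + \mathsf{rank}_\mathbb{R}(M_{\mathfrak{F},\ip{\mcc_n}}) \leq \card{\quotient{\mcc_n}{\cong}}$ and then cancels. Nothing is missing; the substantive work is indeed all in Lemma~\ref{l:dimw}.
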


\begin{proof}
  Applying the Rank-Nullity Theorem, we have
  \[
  \mathsf{dim}(W) +
  \mathsf{rank}_\mathbb{R}(M_{\mathfrak{F},\ip{\mcc_n}}) =
  \card{\quotient{\mcc_n}{\cong}}.
  \]
  It follows from Lemma~\ref{l:dimw} that
  \[
  \alpha(\mcc_n) +
  \mathsf{rank}_\mathbb{R}(M_{\mathfrak{F},\ip{\mcc_n}}) \leq
  \mathsf{dim}(W) +
  \mathsf{rank}_\mathbb{R}(M_{\mathfrak{F},\ip{\mcc_n}}) =
  \card{\quotient{\mcc_n}{\cong}}.
  \]
  Now recalling the definition of $\alpha(\mcc_n)$, we have
  \[
  \card{\quotient{\mcc_n}{\cong}}-\card{\quotient{\mcc_n}{\sim}} +
  \mathsf{rank}_\mathbb{R}(M_{\mathfrak{F},\ip{\mcc_n}}) \leq
  \card{\quotient{\mcc_n}{\cong}},
  \]
  which implies that $\card{\quotient{\mcc_n}{\sim}} \geq
  \mathsf{rank}_\mathbb{R}(M)_{\mathfrak{F},\ip{\mcc_n}}$.
\end{proof}

\begin{cor} \label{c:fullrank} Under the hypotheses of Theorem
  \emph{\ref{t:main}}, if $\mathsf{rank}_\mathbb{R}(M_{\mathfrak{F},
    \mcc_n}) = \card{\quotient{\mcc_n}{\cong}}$ then every graph in
  $\mcc_n$ is reconstructible.
\end{cor}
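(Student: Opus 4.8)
The plan is to deduce this immediately from Theorem~\ref{t:main} together with the elementary inequality $\card{\quotient{\mcc_n}{\sim}} \leq \card{\quotient{\mcc_n}{\cong}}$ recorded in the preliminaries. First I would recall why that inequality holds and what equality means: each reconstruction class $R \in \quotient{\mcc_n}{\sim}$ is a union of one or more isomorphism classes (since $\cong$ refines $\sim$), so the map sending an isomorphism class to the reconstruction class containing it is a surjection from $\quotient{\mcc_n}{\cong}$ onto $\quotient{\mcc_n}{\sim}$; hence $\card{\quotient{\mcc_n}{\sim}} \leq \card{\quotient{\mcc_n}{\cong}}$, with equality precisely when this surjection is a bijection, i.e.\ when every reconstruction class is a single isomorphism class, i.e.\ when $\mcc_n$ is reconstructible.

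Next I would simply chain the inequalities. Under the full-rank hypothesis $\mathsf{rank}_\mathbb{R}(M_{\mathfrak{F},\ip{\mcc_n}}) = \card{\quotient{\mcc_n}{\cong}}$, Theorem~\ref{t:main} gives
\[
\card{\quotient{\mcc_n}{\cong}} \;=\; \mathsf{rank}_\mathbb{R}(M_{\mathfrak{F},\ip{\mcc_n}}) \;\leq\; \card{\quotient{\mcc_n}{\sim}} \;\leq\; \card{\quotient{\mcc_n}{\cong}},
\]
so all three quantities are equal. By the characterisation of equality above, $\mcc_n$ is reconstructible, which is exactly the assertion that every graph in $\mcc_n$ is reconstructible (modulo big edges, in the sense fixed in the preliminaries, which is the appropriate notion here since $\mcc_n \subseteq \mcg_n$ contains no big edges).

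There is essentially no obstacle: the only point worth a sentence of care is that $\mathfrak{F}$ consists of sequences of graphs on at most $n-1$ vertices and $\mcc_n$ is finite, recognisable, and satisfies Kocay's Lemma, so the hypotheses of Theorem~\ref{t:main} are inherited verbatim and the matrix $M_{\mathfrak{F},\ip{\mcc_n}}$ in the corollary is the same object to which the theorem applies. Everything else is the one-line arithmetic above.
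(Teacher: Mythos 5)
Your proof is correct and matches the paper's intent: the corollary is stated without a separate proof precisely because it follows by chaining the full-rank hypothesis, Theorem~\ref{t:main}, and the inequality $\card{\quotient{\mcc_n}{\sim}} \leq \card{\quotient{\mcc_n}{\cong}}$ (with equality iff $\mcc_n$ is reconstructible) noted in the preliminaries. Nothing is missing.
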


Figure \ref{f:covers} illustrates an application of Corollary
\ref{c:fullrank} to the class of connected graphs on four vertices. We show six sequences of graphs
(indexing rows) and the corresponding covering numbers for each of the six
connected graphs on four vertices (indexing the columns). A zero in $i$-th row
and the $j$-th column (e.g., most entries in the upper triangle) indicates
that there is no way to cover the corresponding graph (indexing a
column) by graphs in the corresponding sequence (indexing the row). The
matrix has full rank, implying that connected graphs on four vertices are
reconstructible.

\begin{figure}[htb]
  \begin{center}
    \includegraphics[scale=0.5]{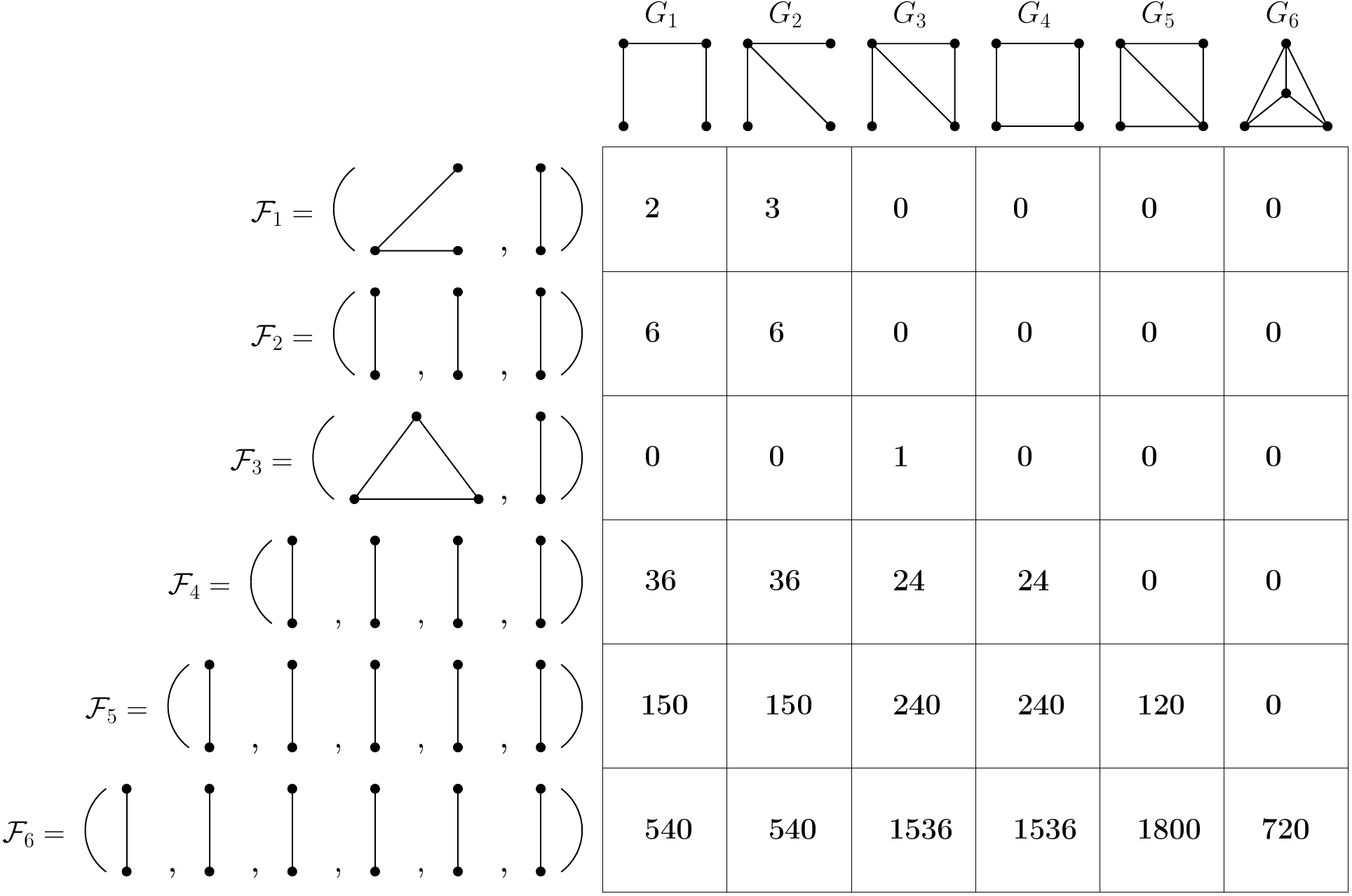}
  \end{center}
  \caption{A full-rank matrix $M$ of covering numbers $c(\mcf_i,G_j)$
    providing a proof through Corollary \ref{c:fullrank} that all
    connected graphs on four vertices are reconstructible.}
 \label{f:covers}
\end{figure}

\subsection{The existence of matrices with optimal rank}

\begin{thm}\label{t:converse}
  Let $\mcc_n$ be a recognizable class of $n$-vertex graphs satisfying
  Kocay's lemma. Then there exists a family $\mathfrak{F}$ of sequences of
  graphs with corresponding matrix of
  covering numbers  $M_{\mathfrak{F},\ip{\mcc_n}}$ such that
  $\mathsf{rank}_{\mathbb{R}}(M_{\mathfrak{F},\ip{\mcc_n}}) =
  \card{\quotient{\mcc_n}{\sim}}$.
\end{thm}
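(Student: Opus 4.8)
The plan is to show that, ranging over all sequences $\mcf$ of graphs on at most $n-1$ vertices, the covering vectors $v_\mcf := (c(\mcf,H))_{H\in\ip{\mcc_n}}\in\rr^{\ip{\mcc_n}}$ span a subspace of dimension exactly $t:=\card{\quotient{\mcc_n}{\sim}}$; granting this, Theorem~\ref{t:main} gives $\mathsf{rank}_\rr(M_{\mathfrak F,\ip{\mcc_n}})\le t$ for every finite family $\mathfrak F$, so taking $\mathfrak F$ to be $t$ sequences whose covering vectors form a basis of that span would finish the proof. I will carry this out for $\mcc_n=\mcg_n$, the class of all simple graphs on $n$ vertices (the case directly relevant to the reconstruction conjecture), indicating at the end what changes for a general recognisable class satisfying Kocay's Lemma. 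Order the isomorphism classes of $\mcc_n$ by number of edges; since $s(X,H)\neq0$ forces $X$ to be a spanning subgraph of $H$, so $e(X)\le e(H)$ with equality only if $X\cong H$, the matrix $(s(X,H))_{X,H\in\ip{\mcc_n}}$ is triangular with unit diagonal, hence invertible. Let $\Lambda(y)(H):=\sum_X y(X)\,s(X,H)$ be the corresponding invertible map on $\rr^{\ip{\mcc_n}}$. Because $\mcc_n$ satisfies Kocay's Lemma, $\Lambda(v_\mcf)(G)=\sum_H c(\mcf,H)\,s(H,G)$ is reconstructible, so $\Lambda(v_\mcf)$ lies in $\mcv:=\{y : y(H)=y(H')\text{ whenever }H\sim H'\}$, a space of dimension $t$; as $\Lambda$ is invertible it suffices to prove $\mathrm{span}\{\Lambda(v_\mcf):\mcf\}=\mcv$.

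The first step, which I expect to be the conceptual heart, is to show that products of subgraph counts already span $\mcv$. Call a sequence $(F_1,\dots,F_m)$ \emph{special} if $v(F_i)=n-1$ for all $i$, and set $p_\mcf:=(\prod_i s(F_i,H))_H$. Each $s(F_i,\cdot)$ lies in $\mcv$ by Kelly's Lemma, so $p_\mcf\in\mcv$; moreover, as $\mcf$ runs over special sequences, the vectors $p_\mcf$ span a subalgebra of $\mcv$, closed under pointwise multiplication because $p_\mcf p_{\mcf'}=p_{\mcf\frown\mcf'}$, and containing the constant function $1=\tfrac1n\,s(\overline{K}_{n-1},\cdot)$. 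This subalgebra separates reconstruction classes, because the deck of an $n$-vertex graph both determines and is determined by the numbers $s(F,\cdot)$ with $v(F)=n-1$, via the invertible relation $s(F,H)=\sum_{F'}\big(\#\{v:H-v\cong F'\}\big)\,\widehat s(F,F')$ with $\widehat s$ counting spanning subgraphs. Since a subalgebra of the algebra of real functions on a finite set which contains the constants and separates the classes of a given equivalence relation must equal the algebra of all functions constant on those classes, this yields $\mathrm{span}\{p_\mcf\}=\mcv$.

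Next I would transfer this from the products $p_\mcf$ to the covering vectors through Kocay's identity. For special $\mcf$, a cover of a graph on fewer than $n-1$ vertices by $\mcf$ is impossible, so Equation~\ref{eq1} becomes
\[
p_\mcf(H) \;=\; \sum_{v(X)=n} c(\mcf,X)\,s(X,H) \;+\; \sum_{v(X)=n-1} c(\mcf,X)\,s(X,H),
\]
that is, $\Lambda(v_\mcf)=p_\mcf-c_\mcf$ with $c_\mcf$ in the subspace $\mcw:=\mathrm{span}\{s(X,\cdot):v(X)=n-1\}\subseteq\mcv$. Hence $\mathrm{span}\{\Lambda(v_\mcf)\}+\mcw=\mcv$, and it remains to realise $\mcw$ inside $\mathrm{span}\{\Lambda(v_\mcf)\}$. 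For this I would use the length-two special sequence $(X,\overline{K}_{n-1})$: since the edgeless piece carries no edges, a short count gives $c\big((X,\overline{K}_{n-1}),Z\big)=[\,Z\cong X\,]$ for every $Z$ on $n-1$ vertices, while $s(\overline{K}_{n-1},\cdot)\equiv n$, so Equation~\ref{eq1} yields $\Lambda\big(v_{(X,\overline{K}_{n-1})}\big)=n\,s(X,\cdot)-s(X,\cdot)=(n-1)\,s(X,\cdot)$. Thus $\mcw\subseteq\mathrm{span}\{\Lambda(v_\mcf)\}$, so $\mathrm{span}\{\Lambda(v_\mcf)\}=\mcv$, $\dim\mathrm{span}\{v_\mcf\}=t$, and the theorem follows as explained above.

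For a general recognisable $\mcc_n$ satisfying Kocay's Lemma, the first step goes through verbatim, but in Equation~\ref{eq1} the sum over $n$-vertex graphs $X$ now also contains graphs $X\notin\mcc_n$; any such $X$ contributing to some $H\in\mcc_n$ must be a proper spanning subgraph of $H$ and hence has strictly fewer edges than $H$, and I expect these ``lower-order'' terms to be absorbable by an induction on the number of edges, using recognisability together with the class's Kocay property, with a sequence adapted to $\mcc_n$ taking over the role of $\overline{K}_{n-1}$ in the last step. Controlling these extra contributions — covers that use graphs outside $\mcc_n$ — is the main respect in which the general case is more delicate than $\mcc_n=\mcg_n$; apart from that, the crux remains the first step, namely that products of reconstructible subgraph counts already span the full space of reconstructible functions.
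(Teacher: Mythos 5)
Your argument for the special case $\mcc_n=\mcg_n$ of all simple $n$-vertex graphs looks correct, and it is a genuinely different route from the paper's. You identify the span of the Kocay vectors $\Lambda(v_\mcf)$ with the whole space $\mcv$ of deck-invariant functions via a finite Stone--Weierstrass argument: the products of the Kelly-reconstructible counts $s(F,\cdot)$ with $v(F)=n-1$ form a unital subalgebra of $\mcv$ that separates reconstruction classes (because the triangular matrix $\widehat s$ makes the passage between the deck and the vector $(s(F,\cdot))_{v(F)=n-1}$ invertible), and you then convert products into covering vectors through Equation~\ref{eq1}, with the neat observation that the sequence $(X,\overline{K}_{n-1})$ yields $\Lambda\bigl(v_{(X,\overline{K}_{n-1})}\bigr)=(n-1)\,s(X,\cdot)$. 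The paper proceeds quite differently: it introduces non-overlapping covering numbers $c^*$, shows each $c^*(\mcf,\cdot)$ is a fixed linear combination of ordinary covering numbers, and exhibits a square submatrix of $M^*$ whose rows are the deck sequences $(G_i-v,\,v\in V(G_i))$ of one representative per reconstruction class and which is upper-triangular with nonzero diagonal under a suitable partial order. In the simple-graph setting both work; yours gives a structural description of the row space, while the paper's construction is insensitive to the ambient class.

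That insensitivity is exactly where your proposal has a genuine gap: the theorem is stated for an arbitrary (finite) recognisable class $\mcc_n$ satisfying Kocay's lemma --- connected graphs, planar graphs, trees, classes of digraphs --- and for these your final paragraph is a plan, not a proof. Concretely, for a proper subclass the identity $p_\mcf=\Lambda(v_\mcf)+c_\mcf$ acquires the additional term $\sum_{X\notin\ip{\mcc_n},\,v(X)=n} c(\mcf,X)\,s(X,\cdot)$, which need not vanish on $\mcc_n$ (e.g.\ disconnected $X$ occurring as a spanning subgraph of a connected $H$), and the same out-of-class terms contaminate the $(X,\overline{K}_{n-1})$ computation, so it only gives $(n-1)\,s(X,\cdot)$ modulo the span of the restrictions $s(X',\cdot)|_{\mcc_n}$ with $X'\notin\mcc_n$, $v(X')=n$. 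The hypothesis that $\mcc_n$ satisfies Kocay's lemma guarantees only that the in-class sums $\sum_{H\in\ip{\mcc_n}}c(\mcf,H)s(H,G)$ are reconstructible; it does not give reconstructibility, let alone membership in $\mathrm{span}\{\Lambda(v_\mcf)\}$, of the individual out-of-class functions $s(X,\cdot)|_{\mcc_n}$, which is what your proposed ``absorb by induction on the number of edges'' step would need (that stronger property is the sufficient condition of the paper's Proposition 2.10, not the hypothesis of the theorem). So the general case --- which is the statement actually being proven, and which the paper's $c^*$/deck-sequence/partial-order argument handles precisely because it never returns to Equation~\ref{eq1} and only ever uses covers by $(n-1)$-vertex graphs --- remains unproved in your write-up.
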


\begin{proof} Let $\mathfrak{F}$ be the family of all {\em inequivalent}
  sequences of length at most $n$ of $(n-1)$-vertex graphs. Here we
  consider two sequences $\mcf_i$ and $\mcf_j$ to be inequivalent if for
  each bijection $f$ from $\mcf_i$ to $\mcf_j$, there is at least one
  graph $F$ in $\mcf_i$ for which $f(F)$ is not isomorphic to $F$. Since
  the covering numbers for sequences of length 1 in $\mathfrak{F}$ are
  all 0, we assume that $\mathfrak{F}$ contains only sequences of length
  at least 2. Let $M_{\mathfrak{F},\ip{\mcc_n}}$ be the corresponding
  matrix of covering numbers. We show below that this choice for the
  family of sequences and its corresponding matrix of covering numbers
  satisfy the desired property.

  For a sequence $\mcf$ and a graph $G$, let $c^*(\mcf,G)$ denote the
  number of tuples $(G_1,G_2, \ldots , G_m)$ of subgraphs of $G$ with
  distinct vertex sets such that $G_i \cong F_i$, $1 \leq i \leq m$, and
  $\bigcup G_i = G$. We call such covers {\em
    non-overlapping}. Correspondingly, we have the matrix
  $M^*_{\mathfrak{F},\ip{\mcc_n}}$ of non-overlapping covering numbers.

  Now let $\mcf := (F_1,F_2,\ldots, F_\ell)$ be a sequence in
  $\mathfrak{F}$. We have the following recurrence for $c(\mcf,G)$:

  \begin{equation*}
    c(\mcf,G) = 
    \sum_{k=2}^\ell \sum_{P \in \mathcal{P}_\ell^k} 
    \sum_{\mathcal{H}:= (H_1,H_2,\ldots, H_k)} \gamma(\mathcal{H})
    c^*(\mathcal{H},G)\prod_{i=1}^{k}c(\mcf|_{P^{-1}(i)},H_i),
  \end{equation*}
  where $\mathcal{P}_\ell^k$ denotes the set of all onto functions from
  $\{1,2,\dots,\ell\}$ to $\{1,2,\dots,k\}$, and $\mcf|_{P^{-1}(i)}$ is
  the subsequence of $\mcf$ consisting of $F_j; j \in P^{-1}(i)$, and
  the innermost sum is over all inequivalent sequences $\mathcal{H}$ of
  length $k$ of graphs on $(n-1)$ vertices. This may be explained as
  follows. Each cover $(G_1,G_2, \ldots , G_\ell)$ of $G$ by $\mcf$
  naturally corresponds to a partition of $\{1,2,\dots,\ell\}$ in $k$
  blocks for some $k \in [2..\ell]$, so that $i,j$ are in the same
  partition if and only if graphs $G_i$ and $G_j$ have the same vertex
  set. We denote partitions of $\{1,2,\dots,\ell\}$ in $k$ blocks by
  onto maps $P$ from $\{1,2,\dots,\ell\}$ to $\{1,2,\dots,k\}$ so that
  the inverse image $P^{-1}(i)$ denotes the $i$-th block. For the $i$-th
  block $P^{-1}(i)$ of an onto map $P$, the union of graphs $G_j; j \in
  P^{-1}(i)$ is a graph $H_i$ on $n-1$ vertices. We denote the
  subsequence of $\mcf$ with indices $j \in P^{-1}(i)$ by
  $\mcf|_{P^{-1}(i)}$. Now the cover of $G$ by the sequence
  $\mathcal{H}:= (H_1,H_2,\ldots, H_k)$ is non-overlapping, and each
  $H_i$ may be covered by $F_j; j \in P^{-1}(i)$ in
  $c(\mcf|_{P^{-1}(i)},H_i)$ ways. We do not need to consider the
  trivial partition of $\{1,2,\dots,\ell\}$ into a single block, because
  there is no cover $(G_1,G_2, \ldots , G_\ell)$ of $G$ by $\mcf$ such
  that all $G_i$ have the same vertex set. In other words, the above
  formula computes $c(\mcf,G)$ by partitioning the coverings according
  to $k$, $P$, and $\mathcal{H}$, and then counting the number of
  coverings in each block of the partition. Since in the formula we use
  onto functions instead of partitions, the same block of coverings
  under this partition may be counted more than once, and therefore
  there is factor $\gamma(\mathcal{H})$ in the formula. If sequence
  $\mathcal{H}$ contains $k_1$ copies of a graph $\Gamma_1$, $k_2$
  copies of a graph $\Gamma_2$, and so on, where $\Gamma_i$ are mutually
  non-isomorphic graphs, then $\gamma(\mathcal{H}) = \left( \prod_i k_i!
  \right)^{-1}$.
  
  Now we rearrange the terms and write
  \begin{equation*}
    c^*(\mcf,G) = c(\mcf,G)
    - \sum_{k=2}^{\ell-1} \sum_{P \in \mathcal{P}_\ell^k} 
    \sum_{\mathcal{H}:= (H_1,H_2,\ldots, H_k)} \gamma(\mathcal{H})
    c^*(\mathcal{H},G)\prod_{i=1}^{k}c(\mcf|_{P^{-1}(i)},H_i).
  \end{equation*}
  Thus we have expressed the non-overlapping covering numbers for a
  sequence of length $\ell$ of graphs in terms of the non-overlapping
  covering numbers for sequences of length at most $\ell-1$. In the
  above equation, $c(\mcf|_{P^{-1}(i)},H_i)$ are constants independent
  of $G$. Also, if $\ell=2$, we have $c^*(\mcf,G) =
  c(\mcf,G)$. Therefore, by repeatedly applying the above equation to
  terms containing non-overlapping covering numbers, we eventually
  obtain
  \begin{equation*}
    c^*(\mcf,G) = 
    \sum_{\mathcal{F^\prime}} 
    \beta_\mathcal{F}(\mathcal{F^\prime})c(\mathcal{F^\prime},G).
  \end{equation*}
  We have written the coefficients as
  $\beta_\mathcal{F}(\mathcal{F^\prime})$ to emphasize that they arise
  from factors $c(\mcf|_{P^{-1}(i)},H_i)$ and $\gamma(\mathcal{H})$ that
  do not depend on $G$. That is, the linear dependence of the
  non-overlapping covering numbers on the covering numbers is the same
  for all graphs (but of course depends on $\mcf$). Therefore, we can
  write
  \begin{equation*}
    c^*(\mcf,\cdot) = 
    \sum_{\mathcal{F^\prime}} 
    \beta_\mathcal{F}(\mathcal{F^\prime})c(\mathcal{F^\prime},\cdot).
  \end{equation*}
  In this manner we have shown that the rows of
  $M^*_{\mathfrak{F},\ip{\mcc_n}}$ are in the span of the rows of
  $M_{\mathfrak{F},\ip{\mcc_n}}$. Therefore, we have
  \begin{equation*}\label{e:rank2}
    \mathsf{rank}_{\mathbb{R}}(M^*_{\mathfrak{F},\ip{\mcc_n}}) \leq
    \mathsf{rank}_{\mathbb{R}}(M_{\mathfrak{F},\ip{\mcc_n}}).
  \end{equation*}

  To show that the rank of $M^*_{\mathfrak{F},\ip{\mcc_n}}$ is
  $\card{\quotient{\mcc_n}{\sim}}$, we construct a square submatrix $K$
  of $M^*_{\mathfrak{F},\ip{\mcc_n}}$ as follows. Let $\{R_i, i =
  1,2,\dots\} \coloneqq \quotient{\mcc_n}{\sim}$. First, for each
  reconstruction class $R_i, i = 1,2, \dots$, we choose one
  reconstruction $G_i$ arbitrarily from $\quotient{R_i}{\cong}$. For
  each $i = 1,2,\dots$, we keep the row indexed by the sequence (say
  $\mcf_i$) that is equivalent to the sequence $(G_i - v, v \in
  V(G_i))$, where the vertices of $G_i$ may be ordered arbitrarily, and
  we keep the column indexed by $G_i$. We delete all other rows and
  columns of $M^*_{\mathfrak{F},\ip{\mcc_n}}$. We show that $K$ has full
  rank, which will imply that
  $\mathsf{rank}_{\mathbb{R}}(M^*_{\mathfrak{F},\ip{\mcc_n}}) \geq
  \mathsf{rank}_{\mathbb{R}}(K) = \card{\quotient{\mcc_n}{\sim}}$.

  We define a partial order $\leq$ on $\quotient{\mcc_n}{\sim}$ so that
  $R_i \leq R_j$ if there exists a bijection $f$ from $V(G_i)$ to
  $V(G_j)$ such that for each $v$ in $V(G_i)$, the graph $G_i -v$ is
  isomorphic to a subgraph of $G_j - f(v)$.

  First we verify that the above relation $\leq$ is a partial order on
  $\quotient{\mcc_n}{\sim}$. The reflexivity and the transitivity are
  straightforward to verify. We now verify antisymmetry. Let $f$ be a
  bijection as in the above paragraph. Therefore, for each $v \in
  V(G_i)$, we have $e(G_i - v) \leq e(G_j - f(v))$. Let $g$ be a similar
  bijection from $V(G_j)$ to $V(G_i)$. Therefore, the bijective
  composition $g\circ f$ from $V(G_i)$ to $V(G_i)$ is such that for all
  $v$ in $V(G_i)$, we have $G_i - v$ is isomorphic to a subgraph of $G_i
  - (g\circ f)(v)$, implying that $e(G_i - v) \leq e(G_j - f(v)) \leq
  e(G_i - (g\circ f)(v))$.  Now observe that $\sum_v e(G_i - v) = \sum_v
  e(G_i - (g\circ f)(v))$, since $g\circ f$ is a bijection from $V(G_i)$
  onto itself. Therefore, we must have $e(G_i-v) = e(G_j - f(v))$ for
  all $v \in V(G_i)$, implying that $G_i - v$ and $G_j - f(v)$ are
  isomorphic for all $v \in V(G_i)$. In other words, $R_i = R_j$.

  We sort the rows and the columns of $K$ so that if $R_i < R_j$, then
  $G_j$ is to the right of $G_i$, and the row corresponding to the
  sequence $\mcf_i$ is above the row corresponding to the family
  $\mcf_j$.

  Now if $c^*(\mcf_i, G_j) > 0$ then $R_i < R_j$, therefore, the matrix
  $K$ is upper-triangular. Also, $c^*(\mcf_i, G_i) > 0$ for all
  $G_i$. Therefore, $K$ has full rank; in fact $\mathsf{rank}_{\mathbb{R}}(K)$ is
  equal $\card{\quotient{\mcc_n}{\sim}}$. Since the class $\mcc_n$ is
  recognizable and satisfies Kocay's lemma, Theorem \ref{t:main} is
  applicable. Therefore,
  \[
  \card{\quotient{\mcc_n}{\sim}} = \mathsf{rank}_{\mathbb{R}}(K) \leq
  \mathsf{rank}_{\mathbb{R}}(M^*_{\mathfrak{F},\ip{\mcc_n}}) \leq
  \mathsf{rank}_{\mathbb{R}}(M_{\mathfrak{F},\ip{\mcc_n}}) \leq
  \card{\quotient{\mcc_n}{\sim}},
  \]
  which implies the claim for our choice of $\mathfrak{F}$, and the
  corresponding matrix $M_{\mathfrak{F},\ip{\mcc_n}}$.
\end{proof}

\begin{example} 
  We provide another simple but non-trivial example in directed graphs,
  which are in general not reconstructible. Figure \ref{f:covers2}
  illustrates a matrix of covering numbers for directed graphs on 3
  vertices, with no multi-arcs or loops. Observe that there are 7
  distinct graphs in 4 reconstruction classes: $G_1$ and $G_2$ are
  reconstructible; $G_3,G_4,G_5$ belong to the same reconstruction
  class; $G_6,G_7$ belong to the same reconstruction class. The figure
  shows 4 rows of the matrix corresponding to 4 graph sequences. The
  rank of the matrix is 4, which is also the number of reconstruction
  classes. It is possible to verify that the rank cannot be improved by
  adding more sequences of graphs.
\end{example}

\begin{figure}[htb]
  \begin{center}
    \includegraphics[scale=0.5]{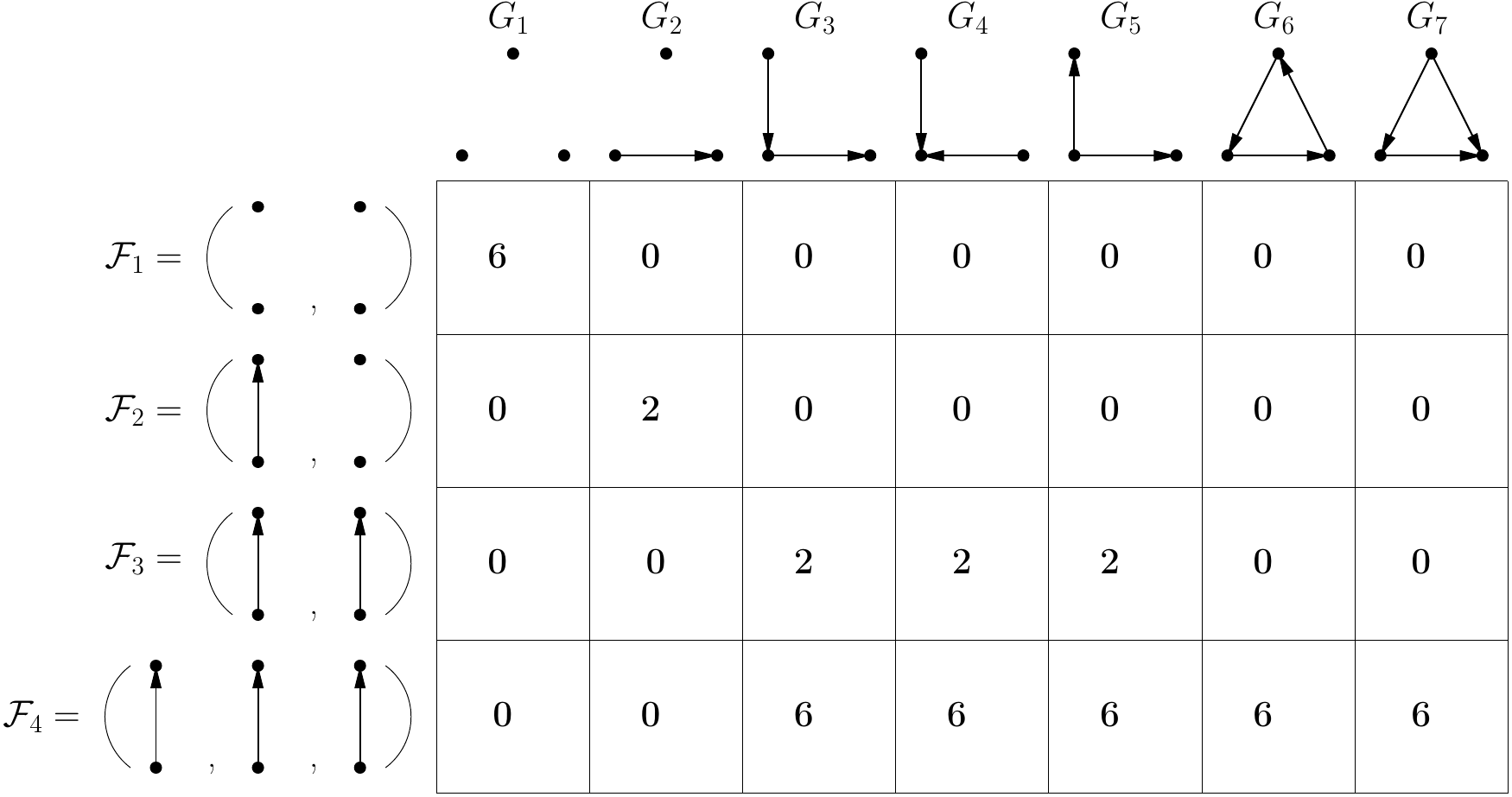}
  \end{center}
  \caption{A matrix of covering numbers for directed graphs on 3
    vertices. There are 4 reconstruction classes and the rank of the
    above matrix is also 4.}
 \label{f:covers2}
\end{figure}

\section*{Acknowledgements}

We would like to thank Hi\d{\^{e}}p H\`{a}n for useful discussions, and the anonymous referees for bringing our attention to the works of Kocay \citep{kocay1982} and Mnukhin \citep{mnukhin1992}. The first author is grateful to Yoshiharu Kohayakawa for hosting him at
Universidade de S\~{a}o Paulo, and would like to thank Orlando Lee for helpful discussions at an early stage of this work.

\bibliography{refs}	

\end{document}